\newtheorem{theoremcounter}{Theorem Counter}[section]
\theoremstyle{definition}
\theoremstyle{plain}
\newtheorem{lemma}[theoremcounter]{Lemma}
\newtheorem{corollary}[theoremcounter]{Corollary}
\newtheorem{theorem}[theoremcounter]{Theorem}
\numberwithin{equation}{section}
\begin{document}

\title[]{The average number of Goldbach representations over multiples of $q$} 

\author{Karin Ikeda} 
\address{Joint Graduate School of Mathematics for Innovation, Kyushu University,
Motooka 744, Nishi-ku, Fukuoka 819-0395, Japan}
\email{ikeda.karin.236@s.kyushu-u.ac.jp}

\author[Suriajaya]{Ade Irma Suriajaya}
\address{Faculty of Mathematics, Kyushu University,
Motooka 744, Nishi-ku, Fukuoka 819-0395, Japan}
\email{adeirmasuriajaya@math.kyushu-u.ac.jp}

\subjclass[2020]{11P32}



\maketitle

\begin{abstract}
	We discuss the evaluation of the average number of Goldbach representations for integers which are multiples of $q$ introduced by Granville. We improve an estimate given by Granville under the generalized Riemann hypothesis.

 \end{abstract}

\section{Introduction}
This paper describes a relationship between the average number of Goldbach representation 
$$
G(N):=\sum_{n\le N}\psi_2(n)
$$
and its variant
$$
G_q(N):=\sum_{\substack{n\le N\\q|n}}\psi_2(n)\qquad(1\le q\le N).
$$
Here
$$
\psi_2(n):=\sum_{m+m'=n}\Lambda(m)\Lambda(m')
$$
with $\Lambda(n)$ the von Mangoldt function, which counts the number of Goldbach representations of $n$ in primes and prime powers. Here and in the following, all sums are over positive integers unless otherwise indicated, and $N\ge 4$ so that $G(N)>0$.
Fujii~\cite{F} proved the following explicit formula for $G(N)$.
\begin{theorem}{\cite{F}}\label{Fujii}
Assuming the Riemann hypothesis, we have
$$
G(N)=\frac{N^2}{2}-2\sum_{\rho}\frac{N^{\rho+1}}{\rho(\rho+1)}+\mathcal{O}\left((N\log N)^{\frac{4}{3}}\right),
$$
where the sum is over the nontrivial zeros $\rho$ of the Riemann zeta function, and the Riemann hypothesis states that ${\rm Re}(\rho)=1/2$.
\end{theorem}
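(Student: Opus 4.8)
The plan is to write $G(N)$ as a Stieltjes integral against $d\psi$, insert von Mangoldt's explicit formula, split off the main term, and then concentrate all the effort on one remainder integral.

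\textbf{Reduction.} With $\psi(x)=\sum_{n\le x}\Lambda(n)$, one has $G(N)=\sum_{m\le N}\Lambda(m)\psi(N-m)=\int_1^N\psi(N-t)\,d\psi(t)$. Set $E(t):=\psi(t)-t$, so that $\psi(t)=t+E(t)$ and, as measures, $d\psi(t)=dt+dE(t)$; expanding the product gives
\[
G(N)=\int_1^N(N-t)\,dt+\int_1^N E(N-t)\,dt+\int_1^N(N-t)\,dE(t)+\int_1^N E(N-t)\,dE(t).
\]
Label the four terms $(\mathrm I)$--$(\mathrm{IV})$ in order. Then $(\mathrm I)=N^2/2+O(N)$; substituting $u=N-t$ in $(\mathrm{II})$ and integrating by parts in $(\mathrm{III})$, each of the two mixed terms equals $\int_0^N E(t)\,dt+O(N)$. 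Under RH, inserting $E(t)=-\sum_\rho t^\rho/\rho-\log 2\pi-\tfrac12\log(1-t^{-2})$ and integrating termwise --- legitimate by using the truncated explicit formula and dominated convergence, the series $\sum_\rho N^{\rho+1}/(\rho(\rho+1))$ being absolutely convergent since its terms are $\ll N^{3/2}/|\rho|^{2}$ --- gives $\int_0^N E(t)\,dt=-\sum_\rho N^{\rho+1}/(\rho(\rho+1))+O(N)$. Hence
\[
(\mathrm I)+(\mathrm{II})+(\mathrm{III})=\frac{N^{2}}{2}-2\sum_\rho\frac{N^{\rho+1}}{\rho(\rho+1)}+O(N),
\]
the factor $2$ being merely the number of mixed terms.

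\textbf{The remainder.} It remains to show $(\mathrm{IV})=\int_1^N E(N-t)\,dE(t)=\mathcal O\bigl((N\log N)^{4/3}\bigr)$. I would introduce a height $T$ and use the truncated explicit formula $E(N-t)=-\sum_{|\gamma|\le T}(N-t)^{\rho}/\rho+O\bigl((N-t)\log^{2}(NT)/T+\log N\bigr)$, where zeros are written $\rho=\tfrac12+i\gamma$. Integrating this error against $|dE(t)|=d\psi(t)+dt$ and using $\int_1^N(N-t)\,|dE(t)|\ll\int_0^N\psi(t)\,dt+N^{2}\ll N^{2}$ shows it contributes $\ll N^{2}\log^{2}(NT)/T$. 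For the main part, integrating by parts in $t$ gives $\int_1^N(N-t)^{\rho}\,dE(t)=(N-1)^{\rho}+\rho\int_1^N(N-t)^{\rho-1}E(t)\,dt$, and $-\sum_{|\gamma|\le T}(N-1)^{\rho}/\rho$ is trivially $\ll\sqrt N\log^{2}N$, so the main part reduces to $-\sum_{|\gamma|\le T}\int_1^N(N-t)^{\rho-1}E(t)\,dt$.

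\textbf{The double sum over zeros.} Now insert the explicit formula for the inner $E(t)$ as well. The key evaluation is $\int_0^N(N-t)^{\rho-1}t^{\rho'}\,dt=N^{\rho+\rho'}\Gamma(\rho)\Gamma(\rho'+1)/\Gamma(\rho+\rho'+1)$, and after the standard truncate--interchange--pass to the limit argument (the auxiliary errors each carry a factor $|\rho|^{-1}$ or a negative power of $N$, so they survive the summation over $|\gamma|\le T$) and the simplification $\Gamma(\rho'+1)=\rho'\Gamma(\rho')$, the main part becomes
\[
\sum_{|\gamma|\le T}\ \sum_{\rho'}N^{\rho+\rho'}\,\frac{\Gamma(\rho)\Gamma(\rho')}{\Gamma(\rho+\rho'+1)}.
\]
Under RH, $|N^{\rho+\rho'}|=N$; Stirling gives $\bigl|\Gamma(\rho)\Gamma(\rho')/\Gamma(\rho+\rho'+1)\bigr|\asymp(|\gamma|+|\gamma'|)^{-3/2}$ when $\gamma$ and $\gamma'$ have the same sign and an exponentially small bound otherwise, which makes the inner sum over $\rho'$ absolutely convergent for each fixed $\rho$ and bounds the whole double sum by $\ll N\,T^{1/2}\log^{2}T$. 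Therefore $(\mathrm{IV})\ll N^{2}\log^{2}N/T+N\,T^{1/2}\log^{2}N$, and the choice $T\asymp N^{2/3}$ balances the two contributions at $\asymp N^{4/3}$ up to logarithmic factors, which yields the stated error.

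\textbf{Main obstacle.} Everything through the identification of the main term is routine; all the work is in $(\mathrm{IV})$. The hard points are: (a) making the conditionally convergent \emph{double} sum over pairs of zeros rigorous --- truncate, interchange, pass to the limit, and check that every auxiliary error is summable over $|\gamma|\le T$; (b) the Stirling analysis of $\Gamma(\rho)\Gamma(\rho')/\Gamma(\rho+\rho'+1)$, the key feature being that the same-sign pairs dominate; and (c) the optimisation of $T$, which is what produces the exponent $4/3$. A crude execution of (b) only gives $N^{4/3}\log^{2}N$, so recovering precisely $(N\log N)^{4/3}$ needs either a sharper estimate for the double sum or a more careful choice of $T$. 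The rest is bookkeeping --- passing between $\psi$ and its midpoint normalisation (cost $\ll N\log N$), the short range $t\in(N-2,N]$ where the explicit formula is unavailable, and the constant and trivial-zero terms of the explicit formula --- all of which is $\ll N\log N$ and harmless.
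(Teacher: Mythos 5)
The paper offers no proof of this statement: it is Fujii's theorem, quoted from \cite{F}, and it is in any case superseded in the text by the stronger \cref{now} of Languasco and Zaccagnini, which is what the authors actually use. Your attempt can therefore only be measured against Fujii's original argument, and your route is essentially his: write $G(N)=\sum_{m}\Lambda(m)\psi(N-m)$ as a Stieltjes integral, put $\psi(t)=t+E(t)$, read off the main terms $\tfrac{N^2}{2}-2\sum_\rho N^{\rho+1}/(\rho(\rho+1))$ from the three easy pieces (this part of your argument is correct, including the factor $2$ from the two mixed terms), and reduce the correlation integral $\int_1^N E(N-t)\,dE(t)$ to the double sum $\sum_{\rho,\rho'}N^{\rho+\rho'}\Gamma(\rho)\Gamma(\rho')/\Gamma(\rho+\rho'+1)$ via the Beta integral, with Stirling showing that same-sign pairs dominate and contribute $(|\gamma|+|\gamma'|)^{-3/2}$. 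That is the right skeleton, and the truncate--interchange--pass-to-the-limit steps you defer are routine under RH (the partial sums of the explicit formula are uniformly $\ll\sqrt{N}\log^2N$ for truncation height at least $N$, so dominated convergence applies against the integrable weight $(N-t)^{\rho-1}$).

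The genuine gap is the one you concede in your final paragraph, and it is not cosmetic: with your two bounds $N^2\log^2(NT)/T$ and $NT^{1/2}\log^2T$, the optimal choice is $T\asymp N^{2/3}$ and the result is $O(N^{4/3}\log^2N)$, whereas the theorem asserts $O\bigl((N\log N)^{4/3}\bigr)=O(N^{4/3}\log^{4/3}N)$; you are short by a factor $\log^{2/3}N$, so as written you have proved a strictly weaker statement. No "more careful choice of $T$" can fix this: if the truncation error is $N^2\log^aN/T$ and the double sum is $NT^{1/2}\log^bN$, optimising $T$ gives $N^{4/3}\log^{(a+2b)/3}N$, so you need $a+2b=4$ rather than your $a=b=2$. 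Closing the gap therefore requires genuinely saving a power of $\log$ in one of the two error terms --- for instance reducing the truncation loss by averaging the truncated explicit formula in $t$ before integrating against $|dE|$, after which $T\asymp(N\log N)^{2/3}$ balances the terms at $(N\log N)^{4/3}$ --- and this is exactly the bookkeeping that produces Fujii's exponent. If your aim is only the displayed formula with some admissible error, note that the paper's own machinery (power-series generating functions plus Gallagher's lemma, \cref{Gallagher}, following \cite{LZ}) yields the far stronger $O(N\log^3N)$ of \cref{now} and bypasses the double sum over pairs of zeros entirely.
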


Using classical results, one immediately sees that the Riemann hypothesis implies that the second main term in Theorem \ref{Fujii} is bounded above by $N^{3/2}$.
In fact, it is known that
$$
G(N)-\frac{N^2}{2}=\mathcal{O}(N^{\frac{3}{2}+\epsilon})\qquad(\forall\epsilon>0)
$$
is equivalent to the Riemann hypothesis, see \cite{G1,G2}, \cite{BhowRuzsa2018} and \cite[Theorem 1 (2)]{BHMS}.
In \cite{BhowRuzsa2018} and \cite{BHMS}, and in addition \cite{BCSS}, more general equivalences between bounds for the right-hand side above and zero-free regions of the Riemann zeta function are obtained.

Fujii's \Cref{Fujii} was improved by Bhowmik and Schlage-Puchta~\cite{BS} who showed that $\mathcal{O}\left((N\log N)^{\frac{4}{3}}\right)$ can be replaced by $\mathcal{O}(N\log^5\!N)$, and further by Languasco and Zaccagnini~\cite{LZ} as follows.

\begin{theorem}{\cite{LZ}}\label{now}
Assuming the Riemann hypothesis, we have
$$
G(N)=\frac{N^2}{2}-2\sum_{\rho}\frac{N^{\rho+1}}{\rho(\rho+1)}+\mathcal{O}(N\log^3\!N),
$$
where the sum is over the nontrivial zeros of the Riemann zeta function.
\end{theorem}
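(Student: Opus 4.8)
The plan is to pass to exponentially smoothed sums, where the additive convolution defining $\psi_2$ becomes a product:
$$\sum_{n\ge1}\psi_2(n)e^{-nz}=\Big(\sum_{m\ge1}\Lambda(m)e^{-mz}\Big)^{2}=:S(z)^{2}\qquad(\Re(z)>0).$$
The first step is the explicit formula for $S$. By Mellin inversion $S(z)=\tfrac1{2\pi i}\int_{(c)}\Gamma(s)z^{-s}\bigl(-\tfrac{\zeta'}{\zeta}(s)\bigr)\,ds$, and moving the contour leftward past the pole at $s=1$, the nontrivial zeros $\rho$, and the pole of $\Gamma$ at $s=0$ gives
$$S(z)=\frac1z-\sum_{\rho}\Gamma(\rho)z^{-\rho}+\mathcal{O}(1)\qquad(z\to0^{+}).$$
Here the Riemann hypothesis is essential: $\lvert\Gamma(\rho)\rvert=\lvert\Gamma(\tfrac12+i\gamma)\rvert=\sqrt{\pi/\cosh(\pi\gamma)}\ll e^{-\pi\lvert\gamma\rvert/2}$, so this sum over $\rho$ converges absolutely and is $\mathcal{O}(z^{-1/2})$. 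Squaring and letting $z=1/N$,
$$\sum_{n\ge1}\psi_2(n)e^{-n/N}=N^{2}-2\sum_{\rho}\Gamma(\rho)N^{\rho+1}+\mathcal{O}(N).$$

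The second step is to remove the smoothing. By Mellin inversion in the other variable, $\Gamma(s)\sum_{n\ge1}\psi_2(n)n^{-s}=\int_0^\infty S(z)^{2}z^{s-1}\,dz$, so, writing $\tilde G(s)=\sum_{n\ge1}\psi_2(n)n^{-s}$ (convergent for $\Re(s)>2$), substituting the formula for $S$ into the part $0<z<1$ of this integral yields the meromorphic continuation
$$\Gamma(s)\tilde G(s)=\frac1{s-2}-2\sum_{\rho}\frac{\Gamma(\rho)}{s-\rho-1}+\sum_{\rho,\rho'}\frac{\Gamma(\rho)\Gamma(\rho')}{s-\rho-\rho'}+H(s),$$
with $H$ holomorphic in $\Re(s)>1$. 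A truncated Perron formula then gives $G(N)=\tfrac1{2\pi i}\int_{(2+\eta)}\tilde G(s)N^{s}s^{-1}\,ds+(\text{error})$, and shifting the line of integration to the left one picks up the pole at $s=2$, with residue $\tfrac{N^{2}}{2}$, and the poles at $s=\rho+1$, with residues summing (using $\Gamma(\rho+1)=\rho\Gamma(\rho)$) to $-2\sum_\rho N^{\rho+1}/(\rho(\rho+1))$ — the two main terms.

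The obstacle, and the reason the exponent of the logarithm is what it is, lies in how far left one may move the contour. The double sum $\sum_{\rho,\rho'}\Gamma(\rho)\Gamma(\rho')/(s-\rho-\rho')$ has poles at $s=1+i(\gamma+\gamma')$, which are dense on the line $\Re(s)=1$, and the corresponding ``sum of residues'' $\sum_{\rho,\rho'}\Gamma(\rho)\Gamma(\rho')N^{\rho+\rho'}/\Gamma(\rho+\rho'+1)$ does \emph{not} converge — its same-sign terms are of size $\asymp N/(\lvert\gamma\rvert+\lvert\gamma'\rvert)^{3/2}$. So $\Re(s)=1$ is effectively a natural boundary, and one must stop at $\Re(s)=1+\delta$ with $\delta\asymp1/\log N$, estimating $\tilde G$ on that line. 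The key point is to bound $\Gamma(s)^{-1}\sum_{\rho,\rho'}\Gamma(\rho)\Gamma(\rho')/(s-\rho-\rho')$ — equivalently, $\lvert S(z)\rvert$ for small $z$ with $\arg(z)$ near $\pm\pi/2$ — by genuinely using the cancellation among the $\rho$, never by absolute values, and to balance $\delta$, the Perron truncation height, and the horizontal segments so that the error from the explicit formula for $S$ stays at $\mathcal{O}(N\log^{2}N)$ while the zeros, counted with the density $\#\{0<\gamma\le U\}\asymp U\log U$, cost only three further logarithms. Carrying this through yields $\mathcal{O}(N\log^{3}N)$, improving the $\mathcal{O}(N\log^{5}N)$ of Bhowmik and Schlage-Puchta; the remaining ingredients — the trivial zeros, the constant in the explicit formula for $S$, the function $H$, the mixed terms with $\gamma,\gamma'$ of opposite sign (which decay like $e^{-\pi\min(\lvert\gamma\rvert,\lvert\gamma'\rvert)}$), and the a priori bound $\psi_2(n)\ll n\log n$ needed for the Perron step — are all routine and contribute $\mathcal{O}(N)$.
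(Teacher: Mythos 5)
This statement is quoted from Languasco--Zaccagnini and is not proved in the paper itself, so the natural benchmark is the paper's proof of its main theorem, which runs the same machine in the character-twisted setting. Measured against that, your write-up has a genuine gap exactly where the theorem lives. Your Step 1 and the formal identification of the main terms (including the residue computation via $\Gamma(\rho+1)=\rho\Gamma(\rho)$) are fine, and you correctly locate the difficulty in the unsmoothing. But the whole content of the result is the $\mathcal{O}(N\log^3\!N)$ error bound, and your treatment of it is the sentence ``Carrying this through yields $\mathcal{O}(N\log^3 N)$.'' The one concrete device you propose --- truncated Perron for $\tilde G(s)=\sum_n\psi_2(n)n^{-s}$ with the contour stopped at $\Re(s)=1+1/\log N$ --- does not deliver this with the estimates you have in hand. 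On that line $|\Gamma(1+\delta+it)|^{-1}\asymp e^{\pi|t|/2}|t|^{-1/2-\delta}$, and this exponential growth is offset by $|\Gamma(\rho)\Gamma(\rho')|\ll e^{-\pi(|\gamma|+|\gamma'|)/2}$ only for pairs with $\gamma+\gamma'\approx t$ and $\gamma,\gamma'$ of the same sign; counting those pairs with the zero-density $\asymp T\log T$ and summing absolute values gives a pointwise bound of order $|t|^{1/2}\log^2|t|\,\log N$ for the continuation of $\tilde G$, and the Perron integral up to the height $T\asymp N$ forced by the truncation error then yields $N^{3/2+o(1)}$ --- worse than Fujii's original bound. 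So ``genuinely using the cancellation among the $\rho$, never by absolute values'' is not a routine finishing touch: it \emph{is} the theorem, and you have not supplied it. (A small symptom of the same optimism: with $\psi_2(n)\ll n\log^2 n$ and $T\asymp N$ the Perron truncation error is $\asymp N\log^2\!N$, not $\mathcal{O}(N)$.)

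The known proof, which the present paper reproduces for $G_q(N)$, avoids any pointwise bound on a vertical line. One writes the sharp cutoff exactly as $G(N)=\int_0^1\Psi(z)^2 I_N(1/z)\,d\alpha$ with the finite kernel $I_N(1/z)=\sum_{n\le N}z^{-n}$, inserts the explicit formula for $\Psi$, and estimates the square of the zero sum \emph{in mean square over} $\alpha$: the bound $|I_N(1/z)|\ll\min\{N,1/|\alpha|\}$ produces a dyadic decomposition into $\asymp\log N$ ranges (one logarithm), and on each range Gallagher's lemma converts the $L^2$ norm of the zero sum into $\int_0^X|\psi(x+h)-\psi(x)-h|^2dx\ll hX\log^2\!X$, the classical RH mean-square estimate (the other two logarithms). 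That $L^2$-over-the-circle input is precisely the cancellation your sketch defers; if you import it into your Mellin framework you will essentially have rederived the standard argument, and without it the argument does not close.
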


Granville~\cite{G1,G2} introduced the function $G_q(N)$ defined by
$$
G_q(N):=\sum_{\substack{n\le N\\q|n}}\psi_2(n),\qquad(2\le q\le N).
$$
When $q=1$, we set $G_1(N)=G(N)$. Granville stated the formula, for any $\epsilon >0$, 
\begin{align}\label{previous}
G_q(N)=\frac{G(N)}{\phi(q)}+\mathcal{O}(N^{1+\epsilon}),
\end{align}
where $\phi(q)$ is Euler's totient function, but the proof of this had to wait until the work of Bhowmik and Schlage-Puchta~\cite{BS} or Languasco and Zaccagnini~\cite{LZ}. Either of these papers can be used to prove the stronger result that
\[G_q(N)=\frac{G(N)}{\phi(q)}+\mathcal{O}(N \log^C\!N)\]
for some fixed constant $C$. There are now a variety of methods for proving this result, see for example \cite{BHMS} where they considered much more general problems.
In this paper, we show that $C=3$ using the same basic method as Languasco and Zaccagnini~\cite{LZ} with some additional refinements from \cite{GS2}. An outline of this proof appeared in Goldston and Suriajaya's work \cite{GS1}, in which they discussed a smooth version of the following theorem.
\begin{theorem}\label{main}
Assume the generalized Riemann hypothesis for Dirichlet $L$-functions $L(s,\chi)$ associated with characters $\chi$ (${\rm mod}~q$).
For $2\le q\le N$, we have 
$$
G_q(N)=\frac{G(N)}{\phi(q)}+\mathcal{O}(N\log^3\!N).
$$
\end{theorem}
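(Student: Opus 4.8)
The plan is to start from the explicit-formula machinery for $\psi_2$ restricted to an arithmetic progression and then sum over progressions $\equiv 0 \pmod q$. Recall that $G_q(N) = \sum_{n \le N,\, q \mid n} \sum_{m+m'=n} \Lambda(m)\Lambda(m')$. The key observation is that in a representation $m + m' = n$ with $q \mid n$, the residues of $m$ and $m'$ modulo $q$ are negatives of each other, so writing $m \equiv a$, $m' \equiv -a \pmod q$ and detecting the congruences with Dirichlet characters mod $q$, one gets
\[
G_q(N) = \frac{1}{\phi(q)}\sum_{\chi \bmod q}\; \sum_{n \le N,\, q \mid n}\; \sum_{m+m'=n} \chi(m)\overline{\chi}(m')\Lambda(m)\Lambda(m') + (\text{negligible prime-power terms}),
\]
where the innermost double sum over $m,m'$ coprime to $q$ can be written via $\psi(x,\chi) = \sum_{n \le x}\Lambda(n)\chi(n)$. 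The $\chi = \chi_0$ term, after accounting for the difference between $\psi(x,\chi_0)$ and $\psi(x)$ (a contribution of size $O(N \log^2 N \cdot \log q)$ from prime divisors of $q$), reproduces $G(N)/\phi(q)$ up to an acceptable error; the heart of the matter is showing that each non-principal $\chi$ contributes $O(N \log^3 N)$ uniformly, and that after dividing by $\phi(q)$ the total is still $O(N\log^3 N)$ — note there are $\phi(q)$ characters, so one really needs a per-character bound of size $O(N\log^3 N)$ that does not lose a factor of $\phi(q)$.

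Following Languasco--Zaccagnini, the efficient route is via a weighted/integrated version: introduce $\widetilde{G}_q(N) = \sum_{n \le N}(N-n)\,\mathbbm{1}_{q \mid n}\,\psi_2(n)$ or work with the generating Dirichlet series / the function $S_\chi(x) = \sum_{n \le x}\Lambda(n)\chi(n)e(\ldots)$ — concretely, one uses the identity relating $\sum_{n}(\text{weight})\psi_2(n)$ over a progression to $\int$ or $\sum$ of $\psi(x,\chi)\psi(y,\overline{\chi})$, then invokes GRH for $L(s,\chi)$ to replace each $\psi(x,\chi)$ by $-\sum_{\rho_\chi} x^{\rho_\chi}/\rho_\chi + O(\log^2(qx))$ with $\Re \rho_\chi = 1/2$. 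The contribution of the zero sums is controlled by the classical mean-value/density estimate $\sum_{\chi \bmod q}\sum_{\rho_\chi}|x^{\rho_\chi}| \ll$ (something like) $q x^{1/2}\log^2(qx)$ and, more importantly, by a second-moment bound of the shape $\int_2^N \bigl|\sum_\rho x^{\rho_\chi - 1}\bigr|^2\,dx \ll \log^3(qN)$ per character — this is exactly where the refinements from \cite{GS2} enter, giving the sharp logarithmic power and the uniformity in $q$. One then removes the smoothing weight by a standard differencing argument (as in \cite{LZ}), which costs only logarithmic factors provided one has the smoothed estimate with room to spare.

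The main obstacle, and the step requiring the most care, is the uniformity in $q$: a naive bound for the non-principal characters would give $O(\phi(q) N \log^C N)$ after summing, which is useless since $q$ can be as large as $N$. The resolution is that the sum over $\rho_\chi$ of all $\chi$ mod $q$ behaves, on average, no worse than a single zeta sum — one exploits cancellation in $\sum_{\chi}\chi(\cdot)$ and the fact that $\sum_{\chi \bmod q} N(T,\chi) \ll \phi(q) T\log(qT)$ so that the "density of zeros per character" is essentially independent of $q$. Technically this means carrying out the second-moment estimate for $\int_0^N |\sum_{\chi \bmod q}\sum_{\rho_\chi} x^{\rho_\chi}|^2 dx$ (or the relevant bilinear variant coming from $\psi(x,\chi)\psi(N-x,\overline\chi)$) and showing it is $O(\phi(q)\, N^2 \log^3(qN))$, so that division by $\phi(q)$ yields the claimed $O(N\log^3 N)$ — here the hypothesis $q \le N$ ensures $\log(qN) \asymp \log N$. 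Assembling these pieces — character decomposition, smoothed explicit formula under GRH, the sharp second-moment estimate with $q$-uniformity, and desmoothing — gives Theorem \ref{main}.
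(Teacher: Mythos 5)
Your overall skeleton matches the paper's: detect $q\mid m+m'$ by orthogonality of characters mod $q$, let the principal character reproduce $G(N)/\phi(q)$, and reduce everything to a per-character bound $O(N\log^3\!N)$ for $\chi\ne\chi_0$ so that the prefactor $1/\phi(q)$ absorbs the number of characters. Two remarks on the framing before the main point: no inter-character cancellation of the kind you describe is needed or used --- the paper simply bounds $\frac{1}{\phi(q)}\sum_{\chi\ne\chi_0}$ by $\max_{\chi\ne\chi_0}$ and proves a $q$-uniform bound for each fixed $\chi$; and the exact identity $G_q(N)=\int_0^1 F_q(z)I_N(1/z)\,d\alpha$ with $z=e^{-1/N}e(\alpha)$ makes your de-smoothing step unnecessary. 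Be careful with the alternative you propose: differencing a Ces\`aro-weighted average $\sum_{n\le N}(N-n)\psi_2(n)$ only recovers $G_q(N)$ if you can also difference the error term, which a one-sided $\mathcal{O}$-bound does not permit.

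The genuine gap is in the estimate for the non-principal characters. After inserting $|I_N(1/z)|\ll\min\{N,1/|\alpha|\}$ and decomposing dyadically in $\alpha$, what is actually needed is, for each $\chi\ne\chi_0$ and each $h$ with $1\ll h\ll N$,
\[
\int_0^{1/(2h)}|\Psi(z,\chi)|^2\,d\alpha \;\ll\; \frac{N}{h}\log^2\!N .
\]
The long-interval second moment you invoke (essentially $\int_0^X|\psi(x,\chi)|^2\,dx\ll X^2\log^2(2q)$ under GRH, i.e.\ your $\int_2^N\bigl|\sum_\rho x^{\rho_\chi-1}\bigr|^2dx\ll\log^3(qN)$) delivers this only for $h\asymp N$, that is, only for the block $|\alpha|\le 1/N$; fed into the other dyadic blocks it loses a factor $N/h$. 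For those blocks one needs a short-interval variance bound: the paper passes through Gallagher's lemma to convert the left-hand side into $h^{-2}\int|\psi(x+h,\chi)-\psi(x,\chi)|^2\,dx$ and then applies the Goldston--Vaughan estimate $J_2(X,h,\chi)\ll(h+1)X\log^2\bigl(3qX/(h+1)\bigr)$ under GRH, extended to imprimitive characters. This is precisely the ingredient that produces the exponent $3$ (one logarithm from the dyadic sum over $k$, two from $J_2$); in Languasco--Zaccagnini's original treatment the same content appears as a double sum over pairs of zeros. Your sketch points to the right sources but never names this pair-correlation-flavored short-interval input, and without it the argument does not close at $N\log^3\!N$.
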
 
Furthermore, by using \cref{now} and \cref{main}, we easily obtain the following corollary.
\begin{corollary}\label{cor}
Under the same assumption of \cref{main}, 
we have 
$$
G_q(N)=\frac{1}{\phi(q)}\left(\frac{N^2}{2}-2\sum_{\rho}\frac{N^{\rho+1}}{\rho(\rho+1)}\right)+\mathcal{O}(N\log^3\!N),
$$
where the sum is over the nontrivial zeros of the Riemann zeta function.
\end{corollary}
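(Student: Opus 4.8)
The plan is to follow the method Languasco and Zaccagnini used to prove \cref{now}, but with Dirichlet $L$-functions modulo $q$ in place of the Riemann zeta function, and to import from \cite{GS2} the refinements that keep every estimate uniform in $q$; this is the sharp-cutoff counterpart of the smooth statement outlined in \cite{GS1}.

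The first step is to rewrite $G_q(N)$ as a Stieltjes convolution of prime-counting functions in arithmetic progressions. Writing $\psi(x;q,a)$ for the sum of $\Lambda(n)$ over $n\le x$ with $n\equiv a\pmod q$, and splitting the condition $q\mid m+m'$ according to $m\equiv a$, $m'\equiv-a\pmod q$, one has
$$
G_q(N)=\sum_{a\bmod q}\int_{0}^{N}\psi(N-t;q,-a)\,d\psi(t;q,a).
$$
For $(a,q)>1$ the measure $d\psi(\cdot;q,a)$ is supported on prime powers whose prime divides $q$, so it is nonzero for only $O(\log N\log q)$ residues $a$ and then carries mass $O(\log N\log q)$; those terms contribute $O\big((\log N\log q)^3\big)=O(N)$ and are discarded. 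For $(a,q)=1$, the explicit formula together with the orthogonality of characters gives $\psi(x;q,a)=x/\phi(q)-\phi(q)^{-1}\mathcal R_a(x)$, where
$$
\mathcal R_a(x):=\sum_{\chi\bmod q}\bar\chi(a)\,\mathcal R_\chi(x),\qquad \mathcal R_\chi(x):=\delta_\chi x-\psi(x,\chi),\qquad \psi(x,\chi):=\sum_{n\le x}\Lambda(n)\chi(n),
$$
and $\delta_\chi=1$ precisely for $\chi=\chi_0$. Under GRH each $\mathcal R_\chi(x)\ll\sqrt{x}\,\log^2(qx)$, one has $\mathcal R_{\chi_0}(x)=x-\psi(x)+O(\log x\log q)$, and where necessary $\mathcal R_\chi$ is expanded by the explicit formula truncated at height $\asymp N$, the truncation costing $O(N\log^2 N)$ after integration.

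I would then extract the main terms by expanding the integrand into a linear${}\times{}$linear part, two linear${}\times{}\mathcal R$ parts, and one $\mathcal R{}\times{}\mathcal R$ part, and by carrying out the summation over $(a,q)=1$ \emph{before} estimating. The linear${}\times{}$linear part equals $\sum_{(a,q)=1}\phi(q)^{-2}\int_0^N(N-t)\,dt=N^2/(2\phi(q))$. In each linear${}\times{}\mathcal R$ part the relation $\sum_{(a,q)=1}\bar\chi(a)=\phi(q)\,\mathbf 1_{\chi=\chi_0}$ annihilates every nonprincipal character, so those two parts together equal $-2\phi(q)^{-1}\int_0^N\mathcal R_{\chi_0}(t)\,dt$; since the nontrivial zeros of $L(s,\chi_0)$ are exactly the nontrivial zeros $\rho$ of $\zeta$, and $\int_0^N(N-t)\,t^{\rho-1}\,dt=\rho^{-1}\!\int_0^N t^{\rho}\,dt=N^{\rho+1}/(\rho(\rho+1))$, this is $-2\phi(q)^{-1}\sum_{\rho}N^{\rho+1}/(\rho(\rho+1))+O(N\log^3 N)$ (using $q\le N$ and $\phi(q)\ge1$). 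Hence
$$
G_q(N)=\frac{1}{\phi(q)}\left(\frac{N^2}{2}-2\sum_{\rho}\frac{N^{\rho+1}}{\rho(\rho+1)}\right)+\mathcal E+O(N\log^3 N),
$$
and \cref{now} rewrites the bracket as $G(N)+O(N\log^3 N)$, which — since $\phi(q)\ge1$ — already produces the asserted main term $G(N)/\phi(q)$ with an admissible error. What remains is to prove that the leftover $\mathcal R{}\times{}\mathcal R$ piece $\mathcal E=\sum_{(a,q)=1}\phi(q)^{-2}\int_0^N\mathcal R_{-a}(N-t)\,d\mathcal R_a(t)$ is $O(N\log^3 N)$.

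By orthogonality, $\sum_{(a,q)=1}\bar\chi(a)\,\overline{\chi'}(-a)=\chi(-1)\,\phi(q)\,\mathbf 1_{\chi'=\bar\chi}$, so
$$
\mathcal E=\frac{1}{\phi(q)}\sum_{\chi\bmod q}\chi(-1)\int_0^N\mathcal R_{\bar\chi}(N-t)\,d\mathcal R_\chi(t),
$$
and it suffices to prove, under GRH for $L(s,\chi)$ and $L(s,\bar\chi)$ and uniformly for every $\chi\bmod q$ with $2\le q\le N$, the bound $\int_0^N\mathcal R_{\bar\chi}(N-t)\,d\mathcal R_\chi(t)\ll N\log^3 N$; summing over the $\phi(q)$ characters and dividing by $\phi(q)$ then closes the argument. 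For $\chi=\chi_0$ this is, up to $O(N\log^2 N)$ corrections for primes dividing $q$, the estimate $\int_0^N R(N-t)\,dR(t)\ll N\log^3 N$ with $R(x)=x-\psi(x)$ that underlies \cref{now}; for general $\chi$ it is its $L$-function analogue. Expanding both factors by the truncated explicit formula reduces it to bounding double sums over zeros of the shape
$$
\sum_{|\gamma_\chi|\le N}\ \sum_{|\gamma'_{\bar\chi}|\le N}\frac{1}{\rho'}\,N^{\rho+\rho'}\,\frac{\Gamma(\rho)\,\Gamma(\rho'+1)}{\Gamma(\rho+\rho'+1)}
$$
and its variants, via $\int_0^N(N-t)^{\rho'}t^{\rho-1}\,dt=N^{\rho+\rho'}\Gamma(\rho)\Gamma(\rho'+1)/\Gamma(\rho+\rho'+1)$, where $\rho=\tfrac12+i\gamma_\chi$ and $\rho'=\tfrac12+i\gamma'_{\bar\chi}$. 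The crux, and the main obstacle of the whole proof, is that one must \emph{not} pass to absolute values here: doing so loses a factor $\sqrt N$ and yields only $O(N^{3/2}\log^2 N)$. Instead one combines the exponential decay of the Gamma-quotient when $\gamma_\chi$ and $\gamma'_{\bar\chi}$ have opposite sign, the genuine cancellation provided by the oscillating factor $N^{i(\gamma_\chi+\gamma'_{\bar\chi})}$ as the zeros vary, and the density bound that the number of zeros $\rho_\chi$ of $L(s,\chi)$ with $|\gamma_\chi|\in[T,T+1]$ is $\ll\log(q(T+2))\ll\log N$, bringing the double sum down to $O(N\log^3 N)$. This is exactly where the refinements of \cite{GS2} over the original treatment in \cite{LZ} enter, here carried out uniformly in the modulus — which is harmless because $q\le N$ keeps every $\log(qT)$ comparable to $\log N$. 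The remaining ingredients — the orthogonality bookkeeping deciding which characters survive, the $(a,q)>1$ contribution, the truncation error, and the induced-versus-primitive and trivial-zero corrections in the explicit formula (each $O(N\log^2 N)$ or smaller) — are routine.
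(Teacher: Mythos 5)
You have taken a far longer route than the paper needs. In the paper, \cref{cor} is an immediate consequence of the two results stated before it: \cref{main} gives $G_q(N)=G(N)/\phi(q)+\mathcal{O}(N\log^3\!N)$, and \cref{now} gives $G(N)=\frac{N^2}{2}-2\sum_{\rho}\frac{N^{\rho+1}}{\rho(\rho+1)}+\mathcal{O}(N\log^3\!N)$; substituting the latter into the former and using $\phi(q)\ge 1$ to absorb $\mathcal{O}(N\log^3\!N)/\phi(q)$ into $\mathcal{O}(N\log^3\!N)$ finishes the proof in two lines. Your proposal does invoke \cref{now} for the bracket but never invokes \cref{main}; instead it sets out to reprove it from scratch by an explicit-formula/Stieltjes-convolution argument in the spirit of \cite{BHMS} and \cite{ThiNgu1}, rather than the circle-method-plus-Gallagher route the paper uses.

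That from-scratch route has a genuine gap exactly where all of the difficulty lives. Your main-term bookkeeping is correct: the $(a,q)>1$ contribution, the orthogonality computation collapsing the cross terms to $-\tfrac{2}{\phi(q)}\int_0^N\mathcal{R}_{\chi_0}(t)\,dt$, and the Beta-integral identity are all fine. But the whole theorem is equivalent to the single estimate you merely assert, namely
\[
\int_0^N \mathcal{R}_{\bar\chi}(N-t)\,d\mathcal{R}_\chi(t)\ll N\log^3\!N
\]
uniformly in $\chi \bmod q$ for $2\le q\le N$. Your justification --- exponential decay of the Gamma-quotient, ``genuine cancellation provided by the oscillating factor $N^{i(\gamma_\chi+\gamma'_{\bar\chi})}$ as the zeros vary,'' and the zero-density bound --- is a wish list, not an argument: GRH supplies no equidistribution statement for sums $\gamma_\chi+\gamma'_{\bar\chi}$ from which cancellation could be extracted, and the known proofs in \cite{LZ} and \cite{GS2} do not in fact proceed by exhibiting such cancellation in a double sum over zeros; they bound an $L^2$-average of $\sum_\rho z^{-\rho}\Gamma(\rho)$ after exploiting the decay of $\Gamma(\rho)$ against $\arg z$. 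Obtaining the exponent $3$ (rather than $4$, $5$, or $(N\log N)^{4/3}$) is precisely the hard-won content of \cite{LZ}, \cite{GS2}, and of this paper's proof of \cref{main}. As written, your key step must either be replaced by a full Dirichlet $L$-function analogue of that analysis carried out in detail, or --- much more simply --- by citing \cref{main}, at which point everything before it becomes unnecessary.
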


An analogous problem was also considered by Nguyen in her recent work \cite{ThiNgu1}.
She considered a general case with prime powers from arbitrary arithmetic progressions using
$$
\sum_{n\le X}\sum_{\substack{m+m'=n \\ m\equiv a \,({\rm mod}\, q) \\ m'\equiv a' \,({\rm mod}\, q')}}\Lambda(m)\Lambda(m')
$$
for positive integers $a, a', q, q'$ satisfying $1\le a<q$, $(a,q)=1$ and $1\le a'<q'$, $(a',q')=1$.
Our $G_q(N)$ is the case $q=q'$ and includes the case $a=a'=q$ which is not covered in \cite{ThiNgu1}.
Nguyen \cite{ThiNgu1} considered also cases when not assuming the generalized Riemann hypothesis analogous to \cite{BHMS}, in addition to a \cite{BS}-type omega result and a Ces\`aro weighted average as in \cite{Lang-Zac2015,CGZ21}.
Finally, we remark that all the $\mathcal{O}$-constants mentioned above are absolute. In the following section, we retain the same notation, unless otherwise specified.

\section{Proof of \cref{main}}
Our method follows closely that of \cite{GS1} where this problem was introduced and \cite{GS2} by Goldston and the second author.
Following earlier works \cite{MV,LZ,GS2,GS1}, we use power series generating functions. This approach originates from Hardy and Littlewood's circle method \cite{HL}.
As in \cite{GS1}, we define for $N\ge 4$ a smooth version of $G_q(N)$ as
$$
F_q(z)=\sum_{\substack{n\\ q|n}}\psi_2(n)z^n,
$$
with $z=re(\alpha)$ and $r=e^{-1/N}$, where $e(\alpha)=e^{2\pi i \alpha}$.
We first show that we can represent $G_q(N)$ by using $F_q(z)$. Setting
$$
I_N(z):=\sum_{n\le N}z^n=z\left(\frac{1-z^N}{1-z}\right),
$$
we have for $1\le q \le N$, 
\begin{align}\label{goma1}
\begin{split}
\int_{0}^{1}F_q(z)I_N\left(\frac{1}{z}\right)d\alpha
&=\int_{0}^{1}\sum_{\substack{n\\q|n}}\psi_2(n)z^n\sum_{n'\le N}\left(\frac{1}{z}\right)^{n'}d\alpha\\
&=\int_{0}^{1}\sum_{\substack{n\\q|n}}\psi_2(n)r^n(e(\alpha))^n\sum_{n'\le N}r^{-n'}(e(\alpha))^{-n'}d\alpha\\
&=\sum_{\substack{n\\q|n}}\psi_2(n)r^n\left(\sum_{n'\le N}r^{-n'}\int_{0}^{1}e(\alpha(n-n'))d\alpha\right)\\
&=\sum_{\substack{n\le N\\q|n}}\psi_2(n)\\
&=G_q(N),
\end{split}
\end{align}
see also \cite[Equation (40)]{GS2}.
In particular, when $q=1$, we get
\begin{align}\label{goma2}
\begin{split}
G(N)=G_1(N)
&=\int_{0}^{1}F_1(N)I_N\left(\frac{1}{z}\right)d\alpha=\int_{0}^{1}\Psi^2(z)I_N\left(\frac{1}{z}\right)d\alpha,
\end{split}
\end{align}
where
\begin{equation}\label{Psi(z)}
\Psi(z)=\sum_{n}\Lambda(n)z^n.
\end{equation}

Next, for a Dirichlet character $\chi$ (mod $q$), we let
$$
\Psi(z,\chi)=\sum_{n}\chi(n)\Lambda(n)z^n.
$$
As shown in \cite{GS1}, $F_q(z)$ can be approximated using a character weighted average of $\Psi(z,\chi)$ as follows.
\begin{lemma}{\cite[Lemma~2.1]{GS1}}\label{GSthm1}
For $N\ge 4$ and $q\ge 2$,
$$
F_q(z)=\frac{1}{\phi(q)}\sum_{\chi ({\rm mod}\,q)}\chi(-1)\Psi(z,\chi)\Psi(z,\bar{\chi})+\mathcal{O}((\log N \log q)^2).
$$
For $q=1$, $F_1(z)=\Psi(z)^2$.
\end{lemma}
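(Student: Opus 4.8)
The plan is to expand $F_q(z)$ as a double series over $m,m'$, detect the congruence $q\mid m+m'$ by orthogonality of the Dirichlet characters modulo $q$ on the part where $m$ and $m'$ are both coprime to $q$, and then bound directly the contribution of the remaining terms. Since $|z|=r<1$ and $\psi_2(n)$ grows at most polynomially, every series below converges absolutely, and we may write
$$
F_q(z)=\sum_{\substack{m,m'\\ q\mid m+m'}}\Lambda(m)\Lambda(m')z^{m+m'}.
$$
Splitting according to whether $(mm',q)=1$ or not, and applying on the coprime part the relation $\frac1{\phi(q)}\sum_{\chi\,({\rm mod}\,q)}\chi(a)\overline{\chi}(b)=\mathbbm{1}[a\equiv b\ ({\rm mod}\ q)]$ with $a=-m$ and $b=m'$ (together with $\chi(-1)=\overline{\chi}(-1)$), one obtains exactly
$$
\sum_{\substack{m,m'\\ (mm',q)=1\\ q\mid m+m'}}\Lambda(m)\Lambda(m')z^{m+m'}
=\frac1{\phi(q)}\sum_{\chi\,({\rm mod}\,q)}\chi(-1)\,\Psi(z,\chi)\Psi(z,\overline{\chi}),
$$
since $\chi(n)=0$ whenever $(n,q)>1$, so each factor $\Psi(z,\chi)$ already carries the coprimality restriction. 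This is the claimed main term, and it also settles the case $q=1$ (where the coprime part is everything) with no error.

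It remains to bound the ``ramified'' sum over pairs with $q\mid m+m'$ and $(m,q)>1$ or $(m',q)>1$. The structural observation is that if a prime $p\mid q$ divides $m$, then $p\mid q\mid m+m'$ forces $p\mid m'$; since $\Lambda$ is supported on prime powers, this means $m=p^j$ and $m'=p^{j'}$ for one and the same prime $p\mid q$. Hence this sum equals $\sum_{p\mid q}(\log p)^2\sum_{j,j'\ge1,\ q\mid p^j+p^{j'}}z^{p^j+p^{j'}}$, whose modulus is at most
$$
\sum_{p\mid q}(\log p)^2\Bigl(\sum_{j\ge1}r^{p^j}\Bigr)^{2}\le\Bigl(\sum_{p\mid q}\log p\sum_{j\ge1}r^{p^j}\Bigr)^{2}=\Bigl(\sum_{n:\,(n,q)>1}\Lambda(n)r^{n}\Bigr)^{2}.
$$

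The only genuine estimate needed is thus $\sum_{n:(n,q)>1}\Lambda(n)r^n\ll\log q\log N$. For a fixed prime $p\mid q$, split $\sum_{j\ge1}r^{p^j}=\sum_{j\ge1}e^{-p^j/N}$ at $p^j\le N$ versus $p^j>N$: the range $p^j\le N$ contributes at most $\log N/\log p$ terms, each $\le1$, while on $p^j>N$ the exponents grow geometrically, so that tail is $O(1)$; hence $\sum_{j\ge1}r^{p^j}\ll 1+\log N/\log p$. Multiplying by $\log p$ and summing over $p\mid q$ gives $\sum_{p\mid q}(\log p+\log N)\ll\log q+\omega(q)\log N\ll\log q\log N$, using $\omega(q)\ll\log q$ and $\log N\ge\log 4>1$. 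Squaring yields the error term $\mathcal{O}((\log N\log q)^2)$, completing the proof. The point to watch out for is precisely this last estimate: bounding $p^j\ge jp$ and summing the geometric series $\sum_j r^{jp}$ would cost a factor of size $\asymp N$, so one must genuinely exploit that there are only $\asymp\log_p N$ powers of $p$ below $N$.
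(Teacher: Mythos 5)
Your argument is correct and is essentially the proof given in \cite{GS1} (which this paper cites for the lemma rather than reproving): orthogonality of characters detects $q\mid m+m'$ on the part with $(mm',q)=1$, and the key structural point---that $q\mid m+m'$ together with $\Lambda(m)\Lambda(m')\neq 0$ and $(m,q)>1$ forces $m$ and $m'$ to be powers of the \emph{same} prime dividing $q$---reduces the discrepancy to $\bigl(\sum_{(n,q)>1}\Lambda(n)r^n\bigr)^2$. Your final estimate $\sum_{(n,q)>1}\Lambda(n)r^n\ll \log N\log q$ is exactly the content of \cref{GSthm2} (equation (17) of \cite{GS1}), and you are right to flag that counting the $\asymp \log N/\log p$ powers of $p$ up to $N$, rather than summing a geometric series in $j$, is the step that keeps the error at $(\log N\log q)^2$.
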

\noindent
The next lemma shows that with respect to the principal character $\chi_0$ (mod $q$) we can approximate $\Psi(z,\chi_0)$ using $\Psi(z)$ defined in \eqref{Psi(z)}.
\begin{lemma}{\cite[(17) of Lemma~2.2]{GS1}}\label{GSthm2}
Let $q\ge 2$ and $\chi_0$ be the principal character (mod $q$). Then we have
$$
\Psi(z,\chi_0)=\sum_{\substack{n\\(n,q)=1}}\Lambda(n) z^n=\Psi(z)+\mathcal{O}(\log N\log q).
$$
\end{lemma}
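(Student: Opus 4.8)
The plan is to estimate the difference $\Psi(z)-\Psi(z,\chi_0)$ directly. Since $\chi_0(n)$ equals $1$ when $(n,q)=1$ and $0$ otherwise, the first displayed equality in the statement is merely the definition of $\chi_0$, and (the series converging absolutely because $|z|=r=e^{-1/N}<1$)
$$\Psi(z)-\Psi(z,\chi_0)=\sum_{(n,q)>1}\Lambda(n)z^n.$$
As $\Lambda$ is supported on prime powers, and a prime power $p^k$ satisfies $(p^k,q)>1$ exactly when $p\mid q$, this collapses to
$$\Psi(z)-\Psi(z,\chi_0)=\sum_{p\mid q}(\log p)\sum_{k\ge 1}z^{p^k}.$$
Everything then reduces to bounding the inner sum over $k$ uniformly in $p$, and then summing over the (few) primes $p\mid q$ with weight $\log p$.

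For the inner sum I would use only $|z|=r=e^{-1/N}$ together with the fact that the exponents $p^k$ grow at least geometrically. Since $p\ge 2$ gives $p^k\ge 2^k$ and $0<r<1$, we get $\sum_{k\ge1}|z|^{p^k}\le\sum_{k\ge1}r^{2^k}=\sum_{k\ge1}e^{-2^k/N}$. Splitting this series at the threshold $2^k\approx N$: there are only $\mathcal{O}(\log N)$ indices $k$ with $2^k\le N$, each contributing at most $1$, while for $2^k>N$ the ratios $2^k/N$ themselves grow geometrically, so that tail contributes $\mathcal{O}(1)$. Hence $\sum_{k\ge1}|z^{p^k}|=\mathcal{O}(\log N)$, uniformly over every prime $p$ (and in particular independently of $q$). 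This is the one spot that needs a little care: the bound is logarithmic, not linear, in $N$ — although each individual term $r^{p^k}$ is close to $1$, only about $\log N$ of them are, and it is this that produces the $\log N$ rather than $N$ factor in the final estimate.

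Finally, combining the two displays with the triangle inequality,
$$|\Psi(z)-\Psi(z,\chi_0)|\le\sum_{p\mid q}(\log p)\sum_{k\ge 1}r^{p^k}\ll(\log N)\sum_{p\mid q}\log p\le(\log N)\log\Bigl(\prod_{p\mid q}p\Bigr)\le\log N\log q,$$
using $\prod_{p\mid q}p\le q$. This yields $\Psi(z,\chi_0)=\Psi(z)+\mathcal{O}(\log N\log q)$ with an absolute implied constant, as claimed. One could equally expand $\Psi(z,\chi_0)=\Psi(z)-\sum_{p\mid q}(\log p)\sum_{k}z^{p^k}$ and estimate the second term directly; the difference form above is just the most economical bookkeeping, and no input beyond $|z|=e^{-1/N}$ and $\sum_{p\mid q}\log p\le\log q$ is needed.
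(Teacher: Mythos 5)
Your proof is correct and complete: the reduction to $\sum_{p\mid q}(\log p)\sum_{k\ge1}z^{p^k}$, the $\mathcal{O}(\log N)$ bound on the inner sum via the geometric growth of the exponents, and the bound $\sum_{p\mid q}\log p\le\log q$ are exactly the standard argument. The paper itself only cites this lemma (referring to \cite{GS1}, \cite{LZ2} and \cite{HL} for the proof), and your argument matches the one given in those sources.
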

\noindent
We remark that \cite[Lemma~2.2]{GS1} also includes the case of general characters $\chi$, see also \cite[Lemma 2.1]{S}. The proof can be found in \cite[Lemma 2]{LZ2} or \cite[Lemmas 1 to 4]{HL}.

We use the short-hand notation
\[ \sum_{\chi} \qquad \text{and} \qquad \sum_{\chi\neq \chi_0} \]
to denote respectively a sum over all characters modulo $q$ and a sum over all characters modulo $q$ except the principal character. Now, applying \cref{GSthm1} and \cref{GSthm2}, we divide the integral on the left-hand side of \eqref{goma1} into three parts
\begin{align*}
G_q(N)
&=\int_{0}^{1}\biggl(\frac{1}{\phi(q)}\sum_{\chi}\chi(-1)\Psi(z,\chi)\Psi(z,\bar{\chi})+\mathcal{O}((\log N \log q)^2)\biggr) I_N\left(\frac{1}{z}\right)d\alpha\\
&=\mathcal{I}_1+\mathcal{I}_2+ \mathcal{O}(\mathcal{I}_3),
\end{align*}
where
\begin{align*}
&\mathcal{I}_1=\int_{0}^{1}\frac{1}{\phi(q)}\chi_0(-1)\Psi(z,\chi_0)\Psi(z,\bar{\chi_0})I_N\left(\frac{1}{z}\right)d\alpha,\\
&\mathcal{I}_2=\int_{0}^{1}\left(\frac{1}{\phi(q)}\sum_{\chi\ne\chi_0}\chi(-1)\Psi(z,\chi)\Psi(z,\bar{\chi})\right)I_N\left(\frac{1}{z}\right)d\alpha,
\end{align*}
and
$$
\mathcal{I}_3 = (\log N \log q)^2\int_{0}^{1}\left|I_N\left(\frac{1}{z}\right)\right|d\alpha.
$$
The term $\mathcal{I}_2$ is an error term that is difficult to estimate, so we handle the easy terms $\mathcal{I}_1$ and $\mathcal{I}_3$ immediately.\\

\noindent\textbf{Evaluation of} \bm{$\mathcal{I}_1$} \textbf{and} \bm{$\mathcal{I}_3$}\textbf{.}

We first need to estimate
$$
\int_{0}^{1}\left|I_N\left(\frac{1}{z}\right)\right|d\alpha.
$$
We recall that $|z|=r=e^{-1/N}\le1$, hence we can trivially bound $I_N(1/z)$ as
\begin{equation}\label{tuika2}
\left|I_N\left(\frac{1}{z}\right)\right|
=\left|\sum_{n\le N}\left(\frac{1}{z}\right)^n\right|
=\left|\frac{1}{z^N}(1+z+\cdots+z^{N-1})\right|
\le eN.
\end{equation}
If $|\alpha|\le 1/2$, then
\[\begin{split} |z-1| &= \sqrt{(r\cos(2\pi \alpha)-1)^2 + (r\sin(2\pi \alpha))^2} = \sqrt{ r^2 +1 - 2r\cos(2\pi \alpha)} \\ &
= \sqrt{(r-1)^2 + 2r(1-\cos(2\pi \alpha))}= \sqrt{(r-1)^2 + 4r(\sin\pi \alpha)^2}\\&
\ge 2\sqrt{r}\sin(\pi|\alpha|) \ge 4 \sqrt{r} |\alpha|,
\end{split}\]
where the last inequality 
uses the fact that $\pi\sin\theta-2\theta\ge0$ for $0\le\theta\le\pi/2$.
Thus we have, for $0<|\alpha|\le 1/2$,
\begin{equation}\label{goma5}
\begin{aligned}
\left|I_N\left(\frac{1}{z}\right)\right|
= \left|\frac{1}{z}\frac{1-\frac{1}{z^N}}{1-\frac{1}{z}}\right|
= \left|\frac{1-z^{-N}}{z-1}\right|
\le \frac{1+|z|^{-N}}{4\sqrt{r}|\alpha|} 
\le \frac{(1+e)\sqrt{e}}{4|\alpha|} 
< \frac{e}{|\alpha|}.
\end{aligned}
\end{equation}
By \eqref{tuika2} and \eqref{goma5}, we conclude for $|\alpha|\le 1/2$ that 
\begin{equation} \label{goma6} \left|I_N\left(\frac{1}{z}\right)\right|\le e\min\left\{N,\frac{1}{|\alpha|}\right\}\end{equation}
Therefore
\begin{align*}
\int_{0}^{1}\left|I_N\left(\frac{1}{z}\right)\right|d\alpha
&=\int_{-\frac{1}{2}}^{\frac{1}{2}}\left|I_N\left(\frac{1}{z}\right)\right|d\alpha\ll\int_{-\frac{1}{2}}^{\frac{1}{2}}\min\left\{N,\frac{1}{|\alpha|}\right\}d\alpha.
\end{align*}
For $N\ge 2$ we have \begin{align*}
\int_{-\frac{1}{2}}^{\frac{1}{2}}\min\left\{N,\frac{1}{|\alpha|}\right\}d\alpha
&=2\left(\int_{0}^{\frac{1}{N}}Nd\alpha+\int_{\frac{1}{N}}^{\frac{1}{2}}\frac{d\alpha}{\alpha}\right)\\
&=2\left(1+\log \frac{N}{2}\right)\ll\log N,
\end{align*}
and thus
\begin{align}\label{goma7}
\int_{0}^{1}\left|I_N\left(\frac{1}{z}\right)\right|d\alpha=\mathcal{O}(\log N).
\end{align}

From this estimate we see immediately that $\mathcal{I}_3 \ll \log^5\!N$.
Applying \cref{GSthm2} and \eqref{goma2}, we obtain 
\begin{align}\nonumber
\mathcal{I}_1
&=\frac{1}{\phi(q)}\int_{0}^{1}\left(\Psi(z)+\mathcal{O}(\log N\log q)\right)^2I_N\left(\frac{1}{z}\right)d\alpha\\\nonumber
&=\frac{1}{\phi(q)}\int_{0}^{1}\Psi^2(z)I_N\left(\frac{1}{z}\right)d\alpha+\mathcal{O}\left(\frac{\log N\log q}{\phi(q)}\int_{0}^{1}\left|\Psi(z)I_N\left(\frac{1}{z}\right)\right|d\alpha\right) \\
&\qquad +\mathcal{O}\left(\frac{\mathcal{I}_3}{\phi(q)}\right) \notag\\\label{goma3}
\begin{split}
&=\frac{G(N)}{\phi(q)}+\mathcal{O}\left(\frac{\log N\log q}{\phi(q)}\int_{0}^{1}\left|\Psi(z)I_N\left(\frac{1}{z}\right)\right|d\alpha\right)+\mathcal{O}\left(\frac{\log^5\!N}{\phi(q)}\right).
\end{split}
\end{align}
Next by partial summation, we can easily show that
\begin{align*}
|\Psi(z)|
&\le \sum_{n}\Lambda(n)e^{-\frac{n}{N}}
\ll \frac1N\int_1^\infty \psi(t)e^{-\frac{t}N} dt
\ll\frac{1}{N}\int_{1}^{\infty}te^{-\frac{t}{N}}dt
\ll N,
\end{align*}
where $\psi(x):=\sum_{n\le x}\Lambda(n)$ and we have used the weaker estimate $\psi(x)\ll x$ than the Prime Number Theorem $\psi(x)\sim x$.
Thus we have that the second term of \eqref{goma3} is 
\begin{equation}\label{goma4}
\ll \frac{N\log^2\!N}{\phi(q)}\int_{0}^{1}\left|I_N\left(\frac{1}{z}\right)\right|d\alpha\\
\ll \frac{N\log^3\!N}{\phi(q)},
\end{equation}
and therefore
\begin{align}\label{goma8}
\mathcal{I}_1 + \mathcal{O}( \mathcal{I}_3) 
=\frac{G(N)}{\phi(q)}+\mathcal{O}\left(\frac{N\log^3\!N}{\phi(q)}\right) + \mathcal{O}(\log^5\!N).
\end{align}
Thus
\begin{equation} \label{OnlyI2left} G_q(N) = \frac{G(N)}{\phi(q)}+ \mathcal{I}_2 + \mathcal{O}\left(\frac{N\log^3\!N}{\phi(q)}\right) + \mathcal{O}(\log^5\!N). \end{equation}
\noindent\textbf{Evaluation of} \bm{$\mathcal{I}_2$}\textbf{.}
Since 
\begin{align}\nonumber
|\mathcal{I}_2|
&\le\int_{0}^{1}\bigg|\frac{1}{\phi(q)}\sum_{\chi\ne\chi_0}\chi(-1)\Psi(z,\chi)\Psi(z,\bar{\chi})\bigg|\left|I_N\left(\frac{1}{z}\right)\right|d\alpha\\\nonumber
&\le \frac{1}{\phi(q)}\sum_{\chi\ne\chi_0}\int_{0}^{1}|\Psi(z,\chi)|^2\left|I_N\left(\frac{1}{z}\right)\right|d\alpha\\\nonumber
&\le \underset{\chi\ne\chi_0}{\text{max}}\int_{0}^{1}|\Psi(z,\chi)|^2\left|I_N\left(\frac{1}{z}\right)\right|d\alpha, 
\end{align}
we have by \eqref{goma6}
\begin{align}\notag
\mathcal{I}_2
&\ll \underset{\chi\ne\chi_0}{\text{max}}\int_{0}^{\frac{1}{2}}|\Psi(z,\chi)|^2\min\left\{N,\frac{1}{\alpha}\right\}d\alpha
\\\notag & \ll \underset{\chi\ne\chi_0}{\text{max}}\left(N\int_0^{\frac1{N}}|\Psi(z,\chi)|^2 \,d\alpha + \sum_{0\le k<\log_2\!N}\frac{N}{2^k}\int_{\frac{2^k}{N}}^{\frac{2^{k+1}}{N}}|\Psi(z,\chi)|^2 \,d\alpha \right) \\ &
\ll N \sum_{0\le k<\log_2\!N}\frac{1}{2^k}\left(\underset{\chi\ne\chi_0}{\rm max}\int_0^{\frac{2^{k+1}}{N}}|\Psi(z,\chi)|^2 \,d\alpha \right). \label{goma12}
\end{align}


The key tool in estimating the integral in \eqref{goma12}, and thus $\mathcal{I}_2$, is Gallagher's lemma~\cite[Lemma~1.9]{M}; one form of which can be stated as follows. 
\begin{lemma}[Gallagher]\label{Gallagher} For any sequence of complex numbers $\{c_n\}$, $n \in \mathbb{Z}$, and
\[ S(\alpha) = \sum_{n=-\infty}^{\infty} c_ne(n\alpha), \qquad \text{where}\qquad \sum_{n=-\infty}^{\infty} |c_n| <\infty,\]
we have for any $h>0$ that
\[\int_{\frac{-1}{2h}}^{\frac{1}{2h}}|S(\alpha)|^2\, d\alpha \ll \frac{1}{h^2}\int_{-\infty}^{\infty} \bigg|\sum_{x<n\le x+h}c_n \bigg|^2 \, dx.\]
\end{lemma}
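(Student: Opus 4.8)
The plan is to derive this from Plancherel's theorem on the real line. Write $T(x)=\sum_{x<n\le x+h}c_n$, so that the right-hand side of the asserted bound equals $h^{-2}\int_{-\infty}^{\infty}|T(x)|^2\,dx$ up to the implied constant. First I would check that $T$ is integrable and square-integrable, so that Plancherel applies: the estimate $|T(x)|\le\sum_{x<n\le x+h}|c_n|\le\sum_n|c_n|$ shows $T$ is bounded, and interchanging sum and integral (justified by $\sum_n|c_n|<\infty$, since $\{x:x<n\le x+h\}=[n-h,n)$ has length $h$) gives $\int_{-\infty}^{\infty}|T(x)|\,dx\le h\sum_n|c_n|<\infty$; hence $T\in L^1(\R)\cap L^2(\R)$ and its Fourier transform is defined everywhere.

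Next I would compute that Fourier transform. For fixed $n$ the condition $x<n\le x+h$ is the same as $n-h\le x<n$, so, interchanging the absolutely convergent sum with the integral and using the normalisation $\widehat g(\xi)=\int_{-\infty}^{\infty}g(x)e(-x\xi)\,dx$,
\[
\widehat T(\xi)=\sum_n c_n\int_{n-h}^{n}e(-x\xi)\,dx
=\frac{e(h\xi)-1}{2\pi i\xi}\sum_n c_n e(-n\xi)
=\frac{e(h\xi)-1}{2\pi i\xi}\,S(-\xi).
\]
Since $|e(h\xi)-1|^2=4\sin^2(\pi h\xi)$ and $|2\pi i\xi|^2=4\pi^2\xi^2$, Plancherel's theorem together with the substitution $\xi\mapsto-\xi$ yields
\[
\int_{-\infty}^{\infty}|T(x)|^2\,dx=\int_{-\infty}^{\infty}|\widehat T(\xi)|^2\,d\xi
=\int_{-\infty}^{\infty}\frac{\sin^2(\pi h\xi)}{\pi^2\xi^2}\,|S(\xi)|^2\,d\xi .
\]

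Finally I would discard all $\xi$ with $|\xi|>1/(2h)$ and bound the kernel from below on what remains. For $|\xi|\le 1/(2h)$ we have $0\le\pi h|\xi|\le\pi/2$, and the elementary inequality $\sin\theta\ge(2/\pi)\theta$ for $0\le\theta\le\pi/2$ gives $\sin(\pi h|\xi|)\ge 2h|\xi|$, whence $\sin^2(\pi h\xi)/(\pi^2\xi^2)\ge 4h^2/\pi^2$ (the limiting value $h^2$ at $\xi=0$ also exceeds $4h^2/\pi^2$). Therefore
\[
\int_{-\infty}^{\infty}|T(x)|^2\,dx\ \ge\ \frac{4h^2}{\pi^2}\int_{-\frac{1}{2h}}^{\frac{1}{2h}}|S(\xi)|^2\,d\xi ,
\]
which rearranges to the claim with implied constant $\pi^2/4$. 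I do not expect a genuine obstacle here; the only points requiring a little care are the verification that $T\in L^1\cap L^2$ so that Plancherel is legitimate, and keeping the sign and normalisation conventions of the Fourier transform consistent throughout the computation of $\widehat T$.
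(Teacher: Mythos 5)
Your proof is correct, and it is essentially the standard argument for Gallagher's lemma: the paper does not prove this statement itself but cites Montgomery's Lemma~1.9, whose proof is exactly this Plancherel computation with the kernel $\sin^2(\pi h\xi)/(\pi^2\xi^2)$ bounded below on $|\xi|\le 1/(2h)$. All the details (the identification of $\{x: x<n\le x+h\}$ with $[n-h,n)$, the verification that $T\in L^1\cap L^2$, and the constant $\pi^2/4$) check out.
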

%
\noindent
Applying this lemma, we have
\begin{align}\nonumber
\int_{0}^{\frac{1}{2h}}|\Psi(z,\chi)|^2\, d\alpha
&= \int_{0}^{\frac{1}{2h}}\left|\sum_{n=1}^\infty\chi(n)\Lambda(n)r^ne(n\alpha)\right|^2d\alpha\\\nonumber
&\ll\frac{1}{h^2}\int_{-\infty}^{\infty}\left|\sum_{x<n\le x+h}\chi(n)\Lambda(n)e^{-\frac{n}{N}}\right|^2dx\\\nonumber
&=\frac{1}{h^2}\int_{-h}^{0}\left|\sum_{n\le x+h}\chi(n)\Lambda(n)e^{-\frac{n}{N}}\right|^2dx+\frac{1}{h^2}\int_{0}^{\infty}\left|\sum_{x<n\le x+h}\chi(n)\Lambda(n)e^{-\frac{n}{N}}\right|^2dx\\\nonumber
&=\frac{1}{h^2}\int_{0}^{h}\left|\sum_{n\le x}\chi(n)\Lambda(n)e^{-\frac{n}{N}}\right|^2dx+\frac{1}{h^2}\int_{0}^{\infty}\left|\sum_{x<n\le x+h}\chi(n)\Lambda(n)e^{-\frac{n}{N}}\right|^2dx \\\label{goma13}
&=:\frac1{h^2}\bigl(I_1(N,h)+I_2(N,h)\bigr).
\end{align}

We introduce the counting function 
\begin{equation*}
\psi(x,\chi):=\sum_{n\le x}\chi(n)\Lambda(n),
\end{equation*}
which is simply the Chebyshev function twisted by a Dirichlet character $\chi$.
Next, we define
\begin{align}
&J_1(X) = J_1(X,\chi):=\int_{0}^{X}\left|\psi(x,\chi)\right|^2dx, \label{J1}\\
&J_2(X,h)= J_2(X,h,\chi) :=\int_{0}^{X}\left|\psi(x+h,\chi) -\psi(x,\chi)\right|^2dx. \label{J2}
\end{align} 
In the rest of the paper we will frequently use (without comment) the inequality, for $a,b\in \Bbb{C}$, 
\begin{equation}\label{ab2}
|a+b|^2 \le (|a|+|b|)^2 \le (|a|+|b|)^2+(|a|-|b|)^2 = 2(|a|^2+|b|^2).
\end{equation}
\begin{lemma}\label{GVlemma} Assuming GRH and $X\ge 1$. Then for any Dirichlet character $\chi\neq \chi_0$ modulo $q$, we have 
\begin{equation}\label{J1bound}
J_1(X)\ll X^2\log^2(2q)
\end{equation}
and, for $0\le h\le X$,
\begin{equation}\label{J2bound}
J_2(X,h) \ll(h+1)X\log^2\left(\frac{3qX}{h+1}\right).
\end{equation}
\end{lemma}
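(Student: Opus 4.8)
The plan is to derive both estimates from the explicit formula for $\psi(x,\chi)$ under GRH, together with the classical fact that, for each real $t$, the number of zeros $\rho=\tfrac12+i\gamma$ of $L(s,\chi)$ with $|\gamma-t|\le 1$ is $\ll\log(q(|t|+2))$. First I would pass to primitive characters: if $\chi$ is induced by the primitive character $\chi^{*}$ modulo $q^{*}\mid q$, then $\psi(x,\chi)-\psi(x,\chi^{*})$ is a sum of $\Lambda$-values over prime powers of primes dividing $q$ but not $q^{*}$, hence $\ll(\log q)(\log x)$, whose square integrates to $\ll X^{2}\log^{2}(2q)$, so it may be discarded in $J_{1}$; together with the trivial bound $J_{2}(X,h)\ll J_{1}(X+h)+J_{1}(X)$ it is harmless for $J_{2}$ as well. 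So from now on $\chi$ is primitive, $q\ge 2$, and for $x\ge 2$ not a prime power GRH gives $\psi(x,\chi)=-\lim_{T\to\infty}\sum_{|\gamma|\le T}x^{\rho}/\rho+O(\log(2qx))$, and likewise $\psi(x+h,\chi)-\psi(x,\chi)=-\lim_{T\to\infty}\sum_{|\gamma|\le T}\rho^{-1}\bigl((x+h)^{\rho}-x^{\rho}\bigr)+O(\log(2q(x+h)))$.

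For $J_{1}(X)$ I first dispose of $X\le\log^{2}(2q)$ by the trivial bound $J_{1}(X)\ll X^{3}$ and of the tail by $\int_{2}^{X}\log^{2}(2qx)\,dx\ll X^{2}\log^{2}(2q)$; Fatou's lemma then reduces matters to bounding $\int_{2}^{X}\bigl|\sum_{|\gamma|\le T}x^{\rho}/\rho\bigr|^{2}\,dx$ \emph{uniformly in} $T$. Writing $x^{\rho}=x^{1/2}x^{i\gamma}$ and $x=e^{u}$, I split $[2,X]$ into $\ll\log X$ dyadic blocks; on a block with right endpoint $Y$ the integral is $\ll Y^{2}\int_{a}^{a+1}\bigl|\sum_{|\gamma|\le T}e^{iu\gamma}/\rho\bigr|^{2}\,du$ for a suitable $a$, and the $Y^{2}$ sum over blocks is $\ll X^{2}$. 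Everything therefore rests on the uniform mean-value bound
\[
\int_{a}^{a+1}\Bigl|\sum_{|\gamma|\le T}\frac{e^{iu\gamma}}{\rho}\Bigr|^{2}\,du\ \ll\ \log^{2}(2q)\qquad\text{(all }a,\ \text{all }T\text{).}
\]
Expanding the square, and using $\bigl|\int_{a}^{a+1}e^{iu(\gamma-\gamma')}\,du\bigr|\le\min(1,2/|\gamma-\gamma'|)$ and $|\rho|^{-1}\ll(1+|\gamma|)^{-1}$, this amounts to $\sum_{\gamma,\gamma'}(1+|\gamma|)^{-1}(1+|\gamma'|)^{-1}\min(1,|\gamma-\gamma'|^{-1})\ll\log^{2}(2q)$; here one must keep \emph{both} factors $(1+|\gamma|)^{-1}$ and $(1+|\gamma'|)^{-1}$ (applying $2ab\le a^{2}+b^{2}$ prematurely makes the sum diverge), splitting the inner $\gamma'$-sum into $|\gamma'|\le|\gamma|/2$, $|\gamma'|\asymp|\gamma|$, $|\gamma'|\ge 2|\gamma|$ and using the unit-interval zero count in each. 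This gives $J_{1}(X)\ll X^{2}\log^{2}(2q)$.

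For $J_{2}(X,h)$, the ranges $0\le h\le 1$ (each unit interval meeting at most one prime, so $J_{2}\ll h\sum_{n\le X+1}\Lambda(n)^{2}\ll hX\log X$) and $X\le\log^{2}(2q)$ are handled trivially, so assume $1\le h\le X$ with $X$ large. After Fatou it suffices to bound $\int_{2}^{X}\bigl|\sum_{|\gamma|\le T}\rho^{-1}((x+h)^{\rho}-x^{\rho})\bigr|^{2}\,dx$ uniformly in $T$ (the tail $\int_{2}^{X}\log^{2}(2q(x+h))\,dx$ is again within the claimed bound). Writing $(x+h)^{\rho}-x^{\rho}=\int_{x}^{x+h}t^{\rho-1}\,dt$ and invoking GRH turns this into the Hermitian form $\iint F(t)\overline{F(s)}\,(ts)^{-1/2}\mu(t,s)\,ds\,dt$, where $F(t)=\sum_{|\gamma|\le T}t^{i\gamma}/\rho$ and $\mu(t,s)=(h-|t-s|)^{+}$ up to boundary corrections. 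Its diagonal $\sum_{|\gamma|\le T}|\rho|^{-2}\int_{2}^{X}|(x+h)^{\rho}-x^{\rho}|^{2}\,dx$ — the dominant term — is estimated by separating the zeros at $|\gamma|\asymp X/h$: those with $|\gamma|>X/h$ give $\ll X^{2}\sum_{|\gamma|>X/h}|\rho|^{-2}\ll hX\log(qX/h)$, and those with $|\gamma|\le X/h$ give $\ll h^{2}\log(2X/h)\,\#\{|\gamma|\le X/h\}\ll hX\log^{2}(3qX/h)$. For the off-diagonal, substituting $t=e^{u}$, $s=e^{u+w}$ turns $\mu$ into a triangular kernel in $w$ of width $\asymp h e^{-u}$, whose $w$-integral against $e^{-i\gamma'w}$ is essentially $(h^{2}e^{-u})\,\Phi(\gamma'h e^{-u})$ with $\Phi$ the associated Fejér kernel, $|\Phi(\xi)|\ll\min(1,\xi^{-2})$; an integration by parts in the surviving $u$-integral yields a factor $\ll(1+|\gamma-\gamma'|)^{-1}$, whereupon the sum over $\gamma,\gamma'$ — run exactly as in $J_{1}$ — contributes only $\ll h^{2}\log^{2}(2q)$, which is dominated by the diagonal. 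Hence $J_{2}(X,h)\ll(h+1)X\log^{2}(3qX/(h+1))$.

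I expect the crux to be the uniform mean-value estimate for the zero sums — the displayed bound above and its short-interval analogue — because a term-by-term estimate makes the underlying sum over pairs of zeros diverge; the required convergence, and the precise exponent $\log^{2}(2q)$ uniform in $q$, in $h$, and in the truncation height $T$, has to be extracted from the joint decay of the two $1/\rho$ factors, the unit-interval zero count, and, for $J_{2}$, the Fejér-kernel structure followed by an integration by parts. The remaining ingredients — the explicit formula, the reduction to primitive characters, the various trivial-range reductions, the dyadic decomposition, and the diagonal bounds — are routine.
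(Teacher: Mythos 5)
Your proposal takes a genuinely different route from the paper. The paper does not reprove the primitive-character case at all: it quotes \eqref{J1bound} and \eqref{J2bound} for primitive $\chi$ directly from Goldston--Vaughan \cite[Lemma~2]{GV}, and the entire content of its proof is the reduction from an imprimitive $\chi$ to the inducing primitive character $\chi^\ast$. Your plan to rederive the primitive case from the explicit formula, the unit-interval zero count $N(t+1,\chi)-N(t,\chi)\ll\log(q(|t|+2))$, and second-moment estimates for the zero sums is the standard Montgomery--Vaughan/Goldston--Vaughan machinery and is the right shape of argument; the diagonal computations you describe are correct, though the off-diagonal claim for $J_2$ (that the Fej\'er-kernel structure plus one integration by parts yields only $\ll h^2\log^2(2q)$) is asserted rather than argued and is exactly the delicate point that \cite{GV} is careful about. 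Self-containedness is what your approach buys; a one-line citation is what the paper's buys.

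There is, however, a concrete gap in the one part of your argument that overlaps with the paper's actual proof, namely the imprimitive reduction for $J_2$. You dismiss the correction $D(x):=\psi(x,\chi)-\psi(x,\chi^\ast)$, which satisfies only the pointwise bound $|D(x)|\ll(\log q)(\log x)$, via ``the trivial bound $J_2(X,h)\ll J_1(X+h)+J_1(X)$.'' Applied to $D$ this gives $\int_0^X|D(x+h)-D(x)|^2\,dx\ll X\log^2(2q)\log^2(2X)$, and this is \emph{not} $\ll (h+1)X\log^2\bigl(3qX/(h+1)\bigr)$ in general: take $h\le 1$ and $q\asymp X$, so the target is $\asymp X\log^2 X$ while your bound is $\asymp X\log^4 X$. (Applying the trivial bound to $J_2$ wholesale is worse still, since it loses the factor $h+1$ entirely.) The paper's proof closes exactly this gap: it splits off the range $x\le h$, where $J_2(h,h,\chi)\ll J_1(2h,\chi)\ll(h+1)^2\log^2(2q)$ is acceptable, and for $x\ge h$ observes that $|D(x+h)-D(x)|\le\sum_{p\mid q}\#\{m: x<p^m\le x+h\}\log p\le\log q$, because the interval $(x,x+h]$ with $x\ge h$ contains at most one power of each prime $p\mid q$; integrating $\log^2 q$ over $[h,X]$ then gives $X\log^2(2q)$, which is within the target. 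Without this (or an equivalent) observation, your reduction to primitive characters does not deliver \eqref{J2bound}.
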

\begin{proof} This is proved in \cite[Lemma~2]{GV} for $\chi$ primitive, so we only need to deal with the case of $\chi$ imprimitive. If $\chi$ is an imprimitive character modulo $q$, then it is induced by a primitive character $\chi^\ast$ modulo $q^\ast$, where $q^\ast | q$. Thus, since we know \eqref{J1bound} and \eqref{J2bound} hold for primitive characters, we have
\[ J_1(X,\chi^\ast )\ll X^2\log^2 (2 q^\ast)\ll X^2\log^2 (2 q) \]
and 
\[ J_2(X,h,\chi^\ast )\ll (h+1)X\log^2 \left(\frac{3q^\ast X}{h+1}\right)\ll (h+1)X\log^2 \left(\frac{3qX}{h+1}\right). \]
Now, following \cite[p. 119]{D},
\begin{equation}\label{psiImprim}|\psi(x,\chi)-\psi(x,\chi^\ast) |\le \sum_{\substack{n\le x\\ (n,q)>1}}\Lambda(n)= \sum_{p|q}\sum_{\substack{m\\ p^m\le x}}\log p \ll (\log (x+2))(\log 2q),\end{equation}
and therefore
\[ \begin{split} J_1(X,\chi) & = \int_{0}^{X}\left|\psi(x,\chi^\ast) + (\psi(x,\chi)- \psi(x,\chi^\ast))\right|^2dx\\ 
&\ll \int_{0}^{X}\left|\psi(x,\chi^\ast)\right|^2dx + \int_{0}^{X}\left|(\log (x+2))( \log 2q)\right|^2dx \\
&\ll X^2\log^2(2q) + X\log^2(X+2)\log^2(2q) \\
&\ll X^2\log^2(2q),
\end{split} \]
which proves \eqref{J1bound} for imprimitive characters.

Next, for $J_2(X,h,\chi)$, we first note that
\[\begin{split} J_2(h,h,\chi) = &\int_{0}^{h}\left|\psi(x+h,\chi) -\psi(x,\chi)\right|^2dx \ll \int_0^{2h} |\psi(x,\chi)|^2dx \\&\ll (h+1)^2\log^2(2q)\ll (h+1)X\log^2(2q)\end{split}\]
on using \eqref{J1bound} which we just proved for all $\chi\neq \chi_0$, and therefore the piece $J_2(h,h,\chi)$ of $J_2(X,h,\chi)$ satisfies \eqref{J2bound}. Letting $\psi_h(x,\chi) := \psi(x+h,\chi)-\psi(x,\chi)$, it remains to prove that 
\[J_2(X,h,\chi)-J_2(h,h,\chi) = \int_h^X |\psi_h(x,\chi)|^2dx\]
satisfies \eqref{J2bound}.
We argue as in \eqref{psiImprim}, making use of the lower bound $x\ge h$ not available there. Thus
\begin{align*}
\Big|\psi_h(x,\chi)-\psi_h(x,\chi^\ast)\Big|
&= \Big|\sum_{x< n \le x+h}(\chi(n)-\chi^\ast(n))\Lambda(n)\Big| \\
&\le \sum_{\substack{x< n \le x+h\\ (n,q)>1}}\Lambda(n)=\sum_{p|q}\sum_{\substack{m\\ x<p^m\le x+h}}\log p .
\end{align*}
The condition $x<p^m\le x+h$ is equivalent to
\[ \frac{\log x}{\log p}<m\le \frac{\log x}{\log p} + \frac{\log (1+\frac{h}{x})}{\log p}. \]
Since $x\ge h$, we have 
\[ \frac{\log (1+\frac{h}{x})}{\log p}\le \frac{\log 2}{\log p} \le 1,\]
and therefore there is at most 1 solution for $m$ in the interval above. Hence 
$\big|\psi_h(x,\chi) -\psi_h(x,\chi^\ast)\big|\le \log q$ and 
\begin{align*} J_2(X,h,\chi)-J_2(h,h,\chi) &= \int_h^X \left|\psi_h(x,\chi^\ast) + O(\log 2q)\right|^2dx\\
& \ll \int_h^X \left|\psi_h(x,\chi^\ast)\right|^2dx + X\log^2(2q) \\&
\ll (h+1)X\log^2 \left(\frac{3q X}{h+1}\right).
\end{align*}
\end{proof}

We now are ready to estimate $I_1(N,h)$ and $I_2(N,h)$ using partial summation and \cref{GVlemma}. Starting with the counting function $\psi(x,\chi)$ we have 
\begin{align*}
\sum_{n\le x}\chi(n)\Lambda(n)e^{-\frac{n}{N}}
= \int_0^x e^{-\frac{u}{N}}d\psi(u,\chi) = \psi(x,\chi)e^{-\frac{x}{N}} + \frac{1}{N}\int_{0}^{x}\psi(u,\chi)e^{-\frac{u}{N}}du.
\end{align*}
Thus
\begin{align*}
I_1(N,h)
&= \int_{0}^{h}\left|\sum_{n\le x}\chi(n)\Lambda(n)e^{-\frac{n}{N}}\right|^2dx \\
&= \int_{0}^{h}\left|\psi(x,\chi)e^{-\frac{x}{N}} + \frac{1}{N}\int_{0}^{x}\psi(u,\chi)e^{-\frac{u}{N}}du \right|^2dx \\
&\le 2\int_{0}^{h}|\psi(x,\chi)|^2e^{-\frac{2x}{N}}dx + \frac{2}{N^2}\int_{0}^{h}\left(\int_{0}^{x}|\psi(u,\chi)|e^{-\frac{u}{N}}du\right)^2dx\\&\le 2\int_{0}^{h}|\psi(x,\chi)|^2 dx + \frac2{N^2}\int_{0}^{h}\left(\int_{0}^{x}|\psi(u,\chi)|^2du\right) \left(\int_{0}^{x}e^{-\frac{2u}{N}}du\right)dx,
\end{align*}
where we used the Cauchy-Schwarz inequality to obtain the last line. Thus
\begin{align}
I_1(N,h) &\le 2J_1(h) + \frac1N \int_{0}^{h}\int_{0}^{x}|\psi(u,\chi)|^2\,du\,dx \notag\\
&\le 2J_1(h) + \frac{h}N \int_{0}^{h}|\psi(u,\chi)|^2\,du
= \left(2+\frac{h}N\right)J_1(h)
\le 3J_1(h). \label{goma15}
\end{align}

Proceeding in the same way for $I_2(N,h)$, we have by partial summation
\begin{align*}
\sum_{x<n\le x+h}\chi(n)\Lambda(n)e^{-\frac{n}{N}} 
&= \int_x^{x+h} e^{-\frac{u}{N}}d\psi(u,\chi) \\&
= \psi(x+h,\chi)e^{-\frac{x+h}{N}} - \psi(x,\chi)e^{-\frac{x}{N}} + \frac{1}{N}\int_{x}^{x+h}\psi(u,\chi)e^{-\frac{u}{N}}du \\&
=\Big(\psi(x+h,\chi)-\psi(x,\chi)\Big)e^{-\frac{x}{N}} + (\psi(x+h,\chi)\left(e^{-\frac{x+h}{N}}-e^{-\frac{x}{N}}\right) \\&
\qquad+ \frac{1}{N}\int_{x}^{x+h}\psi(u,\chi)e^{-\frac{u}{N}}du\\&
=\Big(\psi(x+h,\chi)-\psi(x,\chi)\Big)e^{-\frac{x}{N}} + \mathcal{O}\left(\frac{h}{N} |\psi(x+h,\chi)|e^{-\frac{x}{N}} \right) \\&
\qquad+ \mathcal{O}\left(\frac{1}{N}\int_{x}^{x+h}|\psi(u,\chi)|e^{-\frac{u}{N}}du\right),
\end{align*}
where in the first error term we used the estimate 
\[ \left|e^{-\frac{x+h}{N}}-e^{-\frac{x}{N}}\right| = \left|-\int_{\frac{x}{N}}^{\frac{x}{N}+\frac{h}{N}}e^{-u}du\right| \ll \frac{h}{N}e^{-\frac{x}{N}}.\]
Substituting and using \eqref{ab2} repeatedly, we have 
\[
\begin{split}
I_2(N,h) &=\int_{0}^{\infty}\left|\sum_{x<n\le x+h}\chi(n)\Lambda(n)e^{-\frac{n}{N}}\right|^2dx \\
&\ll \int_{0}^{\infty}|\psi(x+h,\chi)-\psi(x,\chi)|^2e^{-\frac{2x}{N}}dx+\frac{h^2}{N^2}\int_{0}^{\infty}|\psi(x+h,\chi)|^2e^{-\frac{2x}{N}}dx\\
&\qquad\qquad+ \frac{1}{N^2}\int_{0}^{\infty}\left(\int_{x}^{x+h}|\psi(u,\chi)|e^{-\frac{u}{N}}du\right)^2dx.
\end{split}
\]
For the third term we use the Cauchy--Schwarz inequality and change the order of integration to see
\begin{align*}
\frac{1}{N^2}\int_{0}^{\infty}\left(\int_{x}^{x+h}|\psi(u,\chi)|e^{-\frac{u}{N}}du\right)^2dx 
&\le \frac{h}{N^2}\int_{0}^{\infty}\left(\int_{x}^{x+h}|\psi(u,\chi)|^2e^{-\frac{2u}{N}}du\right)dx\\
&=\frac{h}{N^2}\int_{0}^{\infty}\left(\int_{u-h}^{u}dx\right)|\psi(u,\chi)|^2e^{-\frac{2u}{N}}\, du \\& =\frac{h^2}{N^2}\int_{0}^{\infty}|\psi(u,\chi)|^2e^{-\frac{2u}{N}}du.
\end{align*}
Hence we have proved
\begin{align}\label{goma20}
I_2(N,h) \ll \frac{h^2}{N^2}\int_{0}^{\infty}|\psi(x,\chi)|^2e^{-\frac{2x}{N}}dx
+ \int_{0}^{\infty}|\psi(x+h,\chi)-\psi(x,\chi)|^2e^{-\frac{2x}{N}}dx.
\end{align}

Finally, since
\begin{align*}
\int_{0}^{\infty}f(x)e^{-\frac{2x}{N}}dx
&=\int_{0}^{N}f(x)e^{-\frac{2x}{N}}dx+\sum_{j=1}^{\infty}\int_{jN}^{(j+1)N}f(x)e^{-\frac{2x}{N}}dx\\
&\le \int_{0}^{N}f(x)e^{-\frac{2x}{N}}dx+\sum_{j=1}^{\infty}e^{-2j}\int_{jN}^{(j+1)N}f(x)dx\\
&\le \sum_{j=1}^{\infty}\frac{1}{2^{j-1}}\int_{0}^{jN}f(x)dx
\end{align*}
for any function $0\le f(x)\ll |x|^k$ (for some $k\in\mathbb{N}$) on $[0,\infty)$, we have 
\begin{align}\nonumber
I_2(N,h) &\ll \sum_{j=1}^{\infty}\frac{1}{2^j}\left(\frac{h^2}{N^2}\int_{0}^{jN}|\psi(x,\chi)|^2dx+\int_{0}^{jN}|\psi(x+h,\chi)-\psi(x,\chi)|^2dx\right)\\\label{goma21}
&\ll \sum_{j=1}^{\infty}\frac{1}{2^j}\left(\frac{h^2}{N^2}J_1(jN)+J_2(jN,h)\right).
\end{align}
In what follows we will by \eqref{goma12} always choose $h=\frac{N}{2^{k+2}}$ for $0\le k < \log_2\!N$ so that $\frac14\le h \le \frac{N}{4}$. By \eqref{goma13}, \eqref{goma15}, \eqref{goma21}, and \cref{GVlemma}, we have on using $2\le q\le N$ that 
\begin{align*}
&\int_{0}^{\frac{1}{2h}}|\Psi(z,\chi)|^2d\alpha\\
&\qquad\ll\frac{1}{h^2}\int_{0}^{h}\left|\sum_{n\le x}\chi(n)\Lambda(n)e^{-\frac{n}{N}}\right|^2dx+\frac{1}{h^2}\int_{0}^{\infty}\left|\sum_{x<n\le x+h}\chi(n)\Lambda(n)e^{-\frac{n}{N}}\right|^2dx\\
&\qquad=\frac{1}{h^2}\left(I_1(N,h)+I_2(N,h)\right)\\
&\qquad\ll\frac{1}{h^2}\left(J_1(h)+\sum_{j=1}^{\infty}\frac{1}{2^j}\left(\frac{h^2}{N^2}J_1(jN)+J_2(jN,h)\right)\right)\\
&\qquad\ll\frac{1}{h^2}(h^2\log^2\!q)+\frac{1}{N^2}\sum_{j=1}^{\infty}\frac{1}{2^j}(jN)^2\log^2\!q+\frac{1}{h^2}\sum_{j=1}^{\infty}\frac{1}{2^j}jNh\log^2\left(\frac{3qjN}{h}\right)\\
&\qquad\ll \left(1+\sum_{j=1}^{\infty}\frac{j^2}{2^j}\right)\log^2\!q+\frac{N}{h}\log^2\left(\frac{3qN}{h}\right)\sum_{j=1}^{\infty}\frac{j}{2^j}+\frac{N}{h}\sum_{j=1}^{\infty}\frac{j\log^2\!j}{2^j}\\
&\qquad\ll \frac{N}{h}\log^2\!N.
\end{align*}
We thus obtain by \eqref{goma12} that
\begin{align}\nonumber
\mathcal{I}_2
&\ll N\sum_{0\le k<\log_2\!N}\frac{1}{2^k} \left(\underset{\chi\ne\chi_0}{\rm max}\int_{0}^{\frac{2^{k+1}}{N}}|\Psi(z,\chi)|^2d\alpha\right) \\\nonumber
&\ll N\sum_{0\le k<\log_2\!N}\frac{1}{2^k}\left(\frac{N}{N/2^{k+2}}\log^2\!N\right)\\\nonumber
&=N\log^2\!N\sum_{0\le k<\log_2\!N}\frac{2^{k+2}}{2^k}\\\nonumber
&=4N\log^2\!N\sum_{0\le k<\log_2\!N}1 \\
&\ll N\log^3\!N. \label{goma23}
\end{align}

\medskip
From \eqref{OnlyI2left} and \eqref{goma23}, we conclude, for $2\le q\le N$,
\begin{align*}G_q(N)
&=\frac{G(N)}{\phi(q)}+\mathcal{O}(N\log^3\!N)+\mathcal{O}\left(\frac{N}{\phi(q)}\log^3\!N\right)+\mathcal{O}(\log^5\!N)\\
&=\frac{G(N)}{\phi(q)}+\mathcal{O}(N\log^3\!N).
\end{align*}
This completes the proof of \cref{main}.
\qed


\section*{Acknowledgement}
The authors thank Professor Daniel Alan Goldston and Professor Masanobu Kaneko for their valuable suggestions and careful reading of the draft. The authors also thank Professor Yuta Suzuki and Professor Kohji Matsumoto for their helpful comments and remarks, and Professor Alessandro Languasco for additional remarks.
The first author was supported by the WISE program (MEXT) at Kyushu University. The second author was supported by JSPS KAKENHI Grant Number 22K13895.



\end{document}